\numberwithin{equation}{section}
\newtheorem{thm}{Theorem}[section]
\newtheorem{prop}[thm]{Proposition}
\newtheorem{lem}[thm]{Lemma}
\newcommand{\eqa}{\begin{eqnarray}}
\newcommand{\eeqa}{\end{eqnarray}}
\newcommand{\beq}{\begin{equation}}
\newcommand{\eeq}{\end{equation}}
\newcommand{\nn}{\nonumber}
\newcommand{\p}{\partial}
\def \la {\langle}
\def \ra{\rangle}
 \def \dsum{\displaystyle\sum}
\begin{document}
\title[]{ A family of conformally flat Hamiltonian-minimal Lagrangian tori in $\mathbb{CP}^3$ }
 \author[]{Andrey E. Mironov and Dafeng Zuo }

\address{Mironov, Sobolev Mathematical institute,
 Siberian Branch of the Russian Academy of Sciences and
 Novosibirsk state university, Novosibirsk}

\address{Zuo, School of Mathematics, Korea Institute for Advanced Study
207-43 Cheongnyangni 2-dong, Dongdaemun-gu Seoul, 130-722 Korea
 and Department of Mathematics, University of Science and
Technology, Hefei 230026, P.R.China}

\email{mironov@math.nsc.ru, dfzuo@kias.re.kr}

\date{\today}

\maketitle
\begin{abstract}In this paper by reduction we construct a family of conformally flat
Hamiltonian-minimal Lagrangian tori in $\mathbb{CP}^3$ as the image
of the composition of the Hopf map
$\mathcal{H}: \mathbb{S}^7\to \mathbb{CP}^3$
and a map $\psi:\mathbb{R}^3 \to \mathbb{S}^7$ with certain conditions.\\~
\\~
{\bf MSC(2000):} 05C42, 53D12, 35Q51;\\
~
 {\bf Key words:} {conformally flat, Hamiltonian-minimal Lagrangian tori}
\end{abstract}
\section{Introduction}
A Lagrangian submanifold of a K\"ahler manifold is said to be
Hamiltonian-minimal (briefly, H-minimal)  if it is a critical
point of the volume under Hamiltonian deformations. This notion
was introduced by Oh in \cite{Oh}, who also gave an example, that
is, Clifford tori in $\mathbb{C}^n$ with standard Hermitian metric
$$
 \mathbb{S}^1(r_1)\times\dots\times
 \mathbb{S}^1(r_n)\subset{\mathbb C}^n,
$$
 where $\mathbb{S}^1(r_j)$
is a circle of radius $r_j$ in ${\mathbb C}$.

In \cite{HR1}, H\'elein and Romon studied a general construction of H-minimal tori
in $\mathbb{C}^2$ from the point of view of completely integrable systems.
They provided new explicit nontrivial examples of H-minimal
Lagrangian tori which include the examples previously constructed by Castro and Urbano
 in $\Bbb C^2$ \cite{CU2}. A similar construction has been generalized to the cases
 of Hermitian symmetric spaces, e.g., in $\mathbb{CP}^2$, see \cite{HR,HM1,HM2} for details.
 However, in the non-flat cases, the underlying equations are no longer linear, which makes
 the problem much harder.  Although in \cite{HM2}, Ma introduced a spectral
 parameter $\lambda \in \mathbb{S}^1$, as she pointed out, a description of H-minimal
 Lagrangian tori in
  $\mathbb{CP}^2$ in terms of theta functions seems to be possible. But owing to this spectral
 parameter $\lambda \not\in  \mathbb{C}$,  thus it is still open about the integrability of
 this problem in classical sense. In \cite{M2,M3} it is shown that if a Lagrangian
conformal map from ${\mathbb R}^2$ to ${\mathbb C}P^2$ is given as
composition of maps
$\varphi:={\mathcal H}\circ \psi:{\mathbb R}^2\rightarrow S^5\rightarrow{\mathbb
C}P^2,$
where ${\mathcal H}$ is the Hopf map, then components of $\varphi$
satisfy Shr\"odinger equation
$$\Delta\varphi_j+i(\theta_x\partial_x\varphi_j+\theta_y\partial_y\varphi_j)+4e^u\varphi_j=0,$$
where $ds^2=2e^u(dx^2+dy^2)$ is an induced metric and $\theta(x,y)$ is
Lagrangian angle. In the case of H-minimal Lagrangian tori $\theta$ is a linear function.
So in order to construct finite gap tori it is necessary to use spectral data of finite gap Shr\"odiger
operators, see \cite{DKN}, \cite{VN} for details.

In \cite {CU}, Castro and Urbano constructed  a family of minimal Lagrangian tori
in $\mathbb{CP}^2$, which are characterized by their invariance under a one-parameter
 group of holomorphic isometries of $\mathbb{CP}^2$. By using this idea, in \cite{M2,HM3},
 they independently reduced this problem to a  one dimensional system and
 obtained an equivariant solution in terms of elliptic functions, and then
 constructed  H-minimal Lagrangian tori in $\mathbb{CP}^2$.

In high dimensional case one of the present authors constructed some
examples of  H-minimal and minimal Lagrangian immersions and
embeddings in ${\mathbb C}^n$ and $\mathbb{CP}^n$. Recently we can
find some of works about this topic, see \cite{CIU, CG,DO} and
references therein. But it is far from the complete
characterization.

 In this paper we address to construct a family of conformally flat
 H-minimal Lagrangian tori in $\mathbb{CP}^3$ by reduction methods,
 which generalizes the results in \cite{M2,M4,HM3},  with the metric
$$
 ds^2=e^u(dx^2+dy^2+dz^2).
$$
In $\mathbb{CP}^3$, it seems to be a little harder. We thus
restrict to discuss a very special case, that is, $u=u(z)$ and the
Lagrangian angle $\theta=ax+by$, where $a$ and $b$ are arbitrary
real constants.

\section{Preliminaries}

In this section, we review some well-known facts without proofs and
sketch our strategy about the construction of H-minimal Lagrangian
cone or tori in $\mathbb{C}^4$ and $\mathbb{CP}^3$.

\subsection{Notations} Let $\mathbb{C}^4$ be the canonical complex space of dimension $4$ endowed
with an Hermitian product $ \langle u,v
\rangle=\dsum_{k=1}^4u_k\bar{v}_k.$ Let us denote
$\omega=\mbox{Im} \langle ~,~\rangle$ and $(~,~)=\mbox{Re} \langle
~,~\rangle$. Let $\psi: \mathbb{R}^3\to \mathbb{S}^7$ be an
oriented Lagrangian immersion $L$, i.e. $\psi^* \omega=0$, where
$\mathbb{S}^7$ is the unit sphere in ${\mathbb C}^4$. Wolfson in
\cite{Wol} introduced a Lagrangian angle $\theta$ and obtained a
criterion of H-minimality of $L$ in terms of $\theta$, that is,
{\it the Lagrangian immersion $L$ is H-minimal if and only if the
Lagrangian angle $\theta$ is a harmonic function on $L$.} The
Lagrangian angle $\theta$ of $L$ in $\mathbb{C}^4$ is defined by
the formula
$$
 e^{i\,\,\theta(p)}=dz_1\wedge\dots \wedge
 dz_4(\Psi), \quad p\in L, 
$$
 where $z_j,j=1,\cdots,
4$ are coordinates on ${\mathbb C}^4$ and
 $\Psi$ is an orthonormal tangent frame at $p\in L$ with the same
 orientation of $L$. For the general case, see \cite{Wol} for details.

Let $\mathcal{H}:S^7\rightarrow {\mathbb C}P^3$ be the Hopf map.
An induced Hermitian product on ${\mathbb CP}^3$ is  called to be
the Fubini-Study metric defined by
$<\zeta_1,\zeta_2>:=\la \tilde{\zeta}_1,\tilde{\zeta}_2\ra,$
 where $\zeta_i,~i=1,2$ are tangent to ${\mathbb CP}^3$ and
 $\tilde{\zeta}_i$ are the corresponding horizontal lifting by $\mathcal{H}$.
Let $\mathcal{C}$ be a Lagrangian cone in ${\mathbb C}^4$ with the vertex at the origin.
It follows from the definition of $<~,~>$ that $\mathcal{H}(\widetilde{\mathcal{C}})$ is a Lagrangian
submanifold in ${\mathbb CP}^3$, where $\widetilde{\mathcal{C}}=\mathcal{C} \cap S^7$.
Moreover, if the cone $\mathcal{C}$ is H-minimal in $\mathbb{C}^4$, then
$\mathcal{H}(\widetilde{\mathcal{C}})$ is also H-minimal in ${\mathbb
CP}^3$, see \cite{M1,CIU} for details.

\subsection{On conformally flat Lagrangian immersions}
In the following we only consider conformally flat
immersions in $\mathbb{C}^4$ and $\mathbb{CP}^3$.

Let $\psi=(\psi^1,\psi^2,\psi^3,\psi^4): \mathbb{R}^3\to \mathbb{S}^7\subset \mathbb{C}^4$
 be an oriented immersion with a
conformally flat metric
$$
 ds^2=e^{u(x,y,z)}(dx^2+dy^2+dz^2)
$$
satisfying the following properties
$$
  \la \psi,\psi_x\ra=\la \psi,\psi_y\ra=\la \psi,\psi_z\ra=0,
$$
$$
 \la \psi_x,\psi_y\ra= \la \psi_y,\psi_z\ra=\la
 \psi_z,\psi_x\ra=0,
$$
 by the above arguments, thus
$\mathcal{H}\circ\psi$ is a Lagrangian immersion in
$\mathbb{CP}^3$ and \beq
\Phi=(\psi,e^{-u}\psi_x,e^{-u}\psi_y,e^{-u}\psi_z)^{t}\in
\mbox{U}(4)\label{MZ7}\eeq and the Lagrangian angle
$\theta(x,y,z)$ is given by \beq e^{i\,\,\theta}={\rm
det}(\Phi).\label{MZ8}\eeq By using \eqref{MZ7} and \eqref{MZ8},
we have \beq
\Psi=(e^{i\,\,\theta}\psi,e^{-u}\psi_x,e^{-u}\psi_y,e^{-u}\psi_z)^{t}\in
\mbox{SU}(4)\label{MZ9}\eeq and denote \beq
\mathcal{U}:=\Psi_x\Psi^{-1},\quad  \mathcal{V}:=\Psi_y\Psi^{-1},
\quad \mathcal{W}:=\Psi_z\Psi^{-1} \in
\mbox{SU}(4).\label{MZ10}\eeq The compatibility condition of
\eqref{MZ10} is \beq
\mathcal{U}_y-\mathcal{V}_x+[\mathcal{U},\mathcal{V}]=0, \,
     \mathcal{V}_z-\mathcal{W}_y+[\mathcal{V},\mathcal{W}]=0,\,
     \mathcal{W}_x-\mathcal{U}_z+[\mathcal{W},\mathcal{U}]=0.
\label{MZ11}\eeq Moreover, if $\triangle \theta(x,y,z)=0$, then
the immersion $\mathcal{H}\circ\psi$ is a conformally flat
H-minimal Lagrangian immersion in $\mathbb{CP}^3$,  where
$\triangle$ is the corresponding Lapalacian operator given by the
formula
$$
 \triangle:=-e^{-u(z)}\,[\,\frac{\p^2}{\p
 x^2}+\frac{\p^2}{\p y^2}+ \frac{\p^2}{\p
 z^2}+\frac{u_x}{2}\frac{\p}{\p x}+\frac{u_y}{2}\frac{\p}{\p y}
 +\frac{u_z}{2}\frac{\p}{\p z}\,].
$$

Conversely, given $\mbox{SU}(4)$-valued $\mathcal{U}, \ \mathcal{V},\ \mathcal{W}$
satisfying \eqref{MZ11}, we solve the system \eqref{MZ10} with \eqref{MZ9}
for $\Psi\in \mbox{SU}(4)$ and then obtain the immersion $\psi:\mathbb{R}^3\to \mathbb{S}^7$ and the
Lagrangian angle $\theta$. If $\triangle\theta=0$ and $\psi$ satisfies
certain periodic conditions, then the immersion
$\mathcal{H}\circ\psi$ gives a conformally flat H-minimal Lagrangian torus
in $\mathbb{CP}^3$.  Notice that for general case, this method does not work.

\section{Conformally flat H-minimal Lagrangian tori
in $\mathbb{CP}^3$}
In this section, by using the above method, we will construct a special class
of conformally flat H-minimal Lagrangian tori in $\mathbb{CP}^3$ with $u=u(z)$
and the Lagrangian angle $\theta=ax+by$, where $a$ and $b$ are arbitrary
real constants.

\subsection{}The first step is to choose $\mathcal{U}$, $\mathcal{V}$ and $\mathcal{W}$.
By a direct calculation, we have the following lemma.

\begin{lem} Suppose  $\mathcal{U}=(u_{kl})$,
 $\mathcal{V}=(v_{kl})$, $\mathcal{W}=(w_{kl})$ are $\hbox{SU}(4)$-valued matrices
satisfying \eqref{MZ10} and $u_{kl}, v_{kl},w_{kl}$ depend only one variable $z$ for
 $2\leq k,l\leq 4$, then they must be the following form
$$
  \small{\mathcal{U}=
  \left(
  \begin{array}{cccc}
   i a & e^{i\, \theta+u} & 0 & 0\\
   -e^{-i\, \theta+u} & -i(a+c_1) & ic_2 & ic_3e^{-3u}-u'\\
   0 & ic_2 & ic_1 & 0 \\
   0 & ic_3e^{-3u}+u' & 0 & 0 \\
  \end{array}\right),} 
$$
and
$$
 \small{\mathcal{V}=
  \left(
  \begin{array}{cccc}
  ib & 0 & e^{i\, \theta+u} & 0\\
  0 & ic_2 & ic_1 & 0\\
   -e^{-i\, \theta+u} & ic_1 & -i(b+c_2) & ic_3e^{-3u}-u' \\
   0 & 0 & ic_3e^{-3u}+u' & 0 \\
  \end{array}\right),}
$$
and
$$
 \small{\mathcal{W}=
  \left(
  \begin{array}{cccc}
   0 & 0 & 0 & e^{i\, \theta+u}\\
   0 & ic_3e^{-3u} & 0 & 0\\
   0 & 0 & ic_3e^{-3u} & 0 \\
   -e^{-i\, \theta+u} & 0 & 0 & -2ic_3e^{-3u} \\
  \end{array}\right)},
$$
  where
  $u=u(z)$ satisfies the equation
  \beq u'\,^2+e^{2u}+c_3^2e^{-6u}-\mathfrak{C}=0, ~~u'=\frac{du}{dz}.\label{MZ3.3}\eeq
and $c_1$, $c_2$, $c_3$ are arbitrary real constants and $\mathfrak{C}=ac_1+bc_2+2c_1^2+2c_2^2$.
\end{lem}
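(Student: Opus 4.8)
The plan is to read the three matrices off entry by entry from the frame $\Psi$ in \eqref{MZ9}, and then to extract the single scalar constraint \eqref{MZ3.3} from the zero-curvature equations \eqref{MZ11}. Since $\Psi\in\mbox{SU}(4)$, each of $\mathcal{U}=\Psi_x\Psi^{-1}$, $\mathcal{V}=\Psi_y\Psi^{-1}$, $\mathcal{W}=\Psi_z\Psi^{-1}$ is anti-Hermitian and trace-free, and its $(k,l)$ entry is the Hermitian product of the derivative of the $k$-th row of $\Psi$ with the $l$-th row. Writing the rows as $R_1=e^{i\theta}\psi$ and $R_{j+1}=e^{-u}\psi_{x_j}$ (with $x_1,x_2,x_3=x,y,z$), the first row and first column of each matrix follow immediately from $\theta=ax+by$ and $u=u(z)$ together with the orthogonality relations $\la\psi,\psi_{x_j}\ra=0$, their derivatives (giving $\la\psi_{x_jx_j},\psi\ra=-\la\psi_{x_j},\psi_{x_j}\ra$), and the conformality $\la\psi_{x_i},\psi_{x_j}\ra\propto\delta_{ij}$. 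For instance the $(1,1)$ entries come out $ia,ib,0$ (the $z$-entry vanishing because $\theta_z=0$), the couplings $e^{\pm i\theta+u}$ appear only in the slots pairing $\psi_{x_j}$ with $x_j$, and every other first-row/first-column entry vanishes.

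Next I would treat the lower-right $3\times3$ block, whose entries are $e^{-2u}\la\psi_{x_ix_j},\psi_{x_k}\ra$ with $i,j,k\in\{1,2,3\}$. By hypothesis these depend on $z$ only, and anti-Hermiticity makes the block skew-Hermitian with purely imaginary diagonal; the symmetry of $x$ and $y$ relative to $z$ (both orthogonal to $z$, with $u=u(z)$) forces the two equal $\mathcal{W}$-diagonal entries, and hence the $-2ic_3e^{-3u}$ entry by trace-freeness. Feeding this ansatz into \eqref{MZ11}, and using that $\theta$ is linear and $u$ depends on $z$ only so that $\partial_x,\partial_y$ annihilate the block entries, the surviving components become first-order linear ODEs in $z$; integrating them pins the constant entries $ic_1$, $ic_2$ and the diagonal entries $-i(a+c_1)$, $ic_1$, etc., and forces the $z$-profiles $ic_3e^{-3u}$ on the $\mathcal{W}$-diagonal and $ic_3e^{-3u}\pm u'$ in the $(2,4),(4,2),(3,4),(4,3)$ slots, with $c_1,c_2,c_3$ the integration constants.

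The decisive step is the single nontrivial scalar equation, which I expect to be the $(2,3)$ component of $\mathcal{U}_y-\mathcal{V}_x+[\mathcal{U},\mathcal{V}]=0$: there $\mathcal{U}_y$ and $\mathcal{V}_x$ vanish, while the commutator collects $-e^{2u}$ from the $(2,1)(1,3)$ product, $-c_3^2e^{-6u}-u'^2$ from $(2,4)(4,3)$, and the constants $ac_1+bc_2+2c_1^2+2c_2^2$ from the block products, giving exactly $\mathfrak{C}-(u'^2+e^{2u}+c_3^2e^{-6u})=0$, which is \eqref{MZ3.3}. The second-order relation $u''=-e^{2u}+3c_3^2e^{-6u}$ that also surfaces in the $(2,4)$ and $(3,4)$ components of the other two equations in \eqref{MZ11} is then just the $z$-derivative of this first integral, hence automatically consistent and imposing nothing new. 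The main obstacle is the bookkeeping of this middle step: \eqref{MZ11} is three $4\times4$ anti-Hermitian matrix equations, and one must verify that, apart from \eqref{MZ3.3}, every component is identically satisfied by the claimed form while tracking signs carefully enough to produce the precise combination $\mathfrak{C}=ac_1+bc_2+2c_1^2+2c_2^2$; exploiting anti-Hermiticity to halve the entries and separating the equations by their $e^{\pm i\theta}$-homogeneity should keep this manageable.
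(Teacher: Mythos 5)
Your proposal is correct and coincides with the paper's approach: the paper gives no argument beyond the phrase ``by a direct calculation,'' and your outline is exactly that calculation, reading the entries of $\mathcal{U},\mathcal{V},\mathcal{W}$ off the unitary frame \eqref{MZ9} and then pinning down the remaining entries via the zero-curvature conditions \eqref{MZ11}. In particular your decisive step checks out: the $(2,3)$-component of $\mathcal{U}_y-\mathcal{V}_x+[\mathcal{U},\mathcal{V}]=0$ indeed reduces to $\mathfrak{C}-(u'^2+e^{2u}+c_3^2e^{-6u})=0$, i.e.\ \eqref{MZ3.3}, with the stated constant $\mathfrak{C}=ac_1+bc_2+2c_1^2+2c_2^2$.
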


\subsection{}The second step is to solve the following system
\beq \Psi_x=\mathcal{U}\Psi, \quad \Psi_y=\mathcal{V}\Psi,\quad
\Psi_z=\mathcal{W}\Psi.\label{MZ3.7}\eeq
Write
\beq \Psi=(e^{i\,\,\theta}\psi,e^{-u}\psi_x,e^{-u}\psi_y,e^{-u}\psi_z)^{t},
\label{MZ3.8}\eeq
where $\psi=\psi(x,y,z)$ is a smooth function. By using \eqref{MZ3.8}, the system
\eqref{MZ3.7} can be rewritten as
\eqa
&&\psi_{xz}-(u'+i c_3 e^{-3u})\psi_x=0, \label{MZ3.9}\\
&&\psi_{yz}-(u'+i c_3 e^{-3u})\psi_y=0, \label{MZ3.10}\\
&&\psi_{xy}-i(c_2\psi_x+ic_1\psi_y)=0,\label{MZ3.11}
\eeqa
and
\eqa
&&\psi_{xx}+ e^{2u}\psi+i(a+c_1)\psi_x-ic_2\psi_y+((u'-i c_3 e^{-3u})\psi_z=0,\label{MZ3.13}\\
&&\psi_{yy}+ e^{2u}\psi-ic_1\psi_x+i(c_2+b)\psi_y+((u'-i c_3 e^{-3u})\psi_z=0,\label{MZ3.14}\\
&&\psi_{zz}+ e^{2u}\psi+(2ic_3e^{-3u}-u')\psi_z=0. \label{MZ3.12}
\eeqa

From \eqref{MZ3.9} and \eqref{MZ3.10}, we know that $\psi$ must be
of the form \beq \psi=P(z)\varphi(x,y)+Q(z), \quad \varphi(x,y)\ne
\mbox{constant}, \label{MZ3.15}\eeq and then \eqref{MZ3.9} and
\eqref{MZ3.10} reduce to \beq P(z)-(u'+i c_3
e^{-3u})P(z)=0.\label{MZ3.16}\eeq The solution of \eqref{MZ3.16} is
\beq P(z)=a_1e^{u+ic_3\int e^{-3u}dz},\quad a_1\in
\mathbb{R}.\label{MZ3.17}\eeq

Substituting \eqref{MZ3.15} and \eqref{MZ3.17} into
\eqref{MZ3.11}, we get \beq
\varphi_{xy}-i(c_2\varphi_x+c_1\varphi_y)=0.\label{MZ3.18}\eeq
Substituting \eqref{MZ3.15} into \eqref{MZ3.13} and
\eqref{MZ3.14}, and then using \eqref{MZ3.3} and \eqref{MZ3.18},
we get the following \eqa
&& Q'(z)=\frac{e^{5u}Q(z)}{ic_3-u'e^{3u}},\label{MZ3.20}\\
&& \varphi_{xx}+i(a+c_1)\varphi_x-ic_2\varphi_y+\mathfrak{C}\,\varphi=0,\label{MZ3.21}\\
&& \varphi_{yy}-ic_1\varphi_x+i(b+c_2)\varphi_y+\mathfrak{C}\,\varphi=0.\label{MZ3.22}
\eeqa
Write \beq Q(z)=H(z)e^{iG(z)},\label{MZ3.19}\eeq
where $G(z)$ and $H(z)$ are real smooth functions.
By differentiating \eqref{MZ3.3}, we obtain
\beq u''+e^{2u}-3c_3^2e^{-6u}=0.\label{MZ3.1}\eeq
By using \eqref{MZ3.1}, and separating the real part and the imaginary part of \eqref{MZ3.20}, we obtain
$$
 H'(z)(\mathfrak{C}-e^{2u})+u'e^{2u}H(z)=0,\quad
 G'(z)(e^{2u}-\mathfrak{C})-c_3e^{-u}=0,\label{MZ3.23}
$$
thus
$$
 H(z)=a_2\sqrt{\mathfrak{C}-e^{2u}}, ~~a_2\in
 \mathbb{R},\quad
 G(z)=\int \frac{c_3e^{-u}}{e^{2u}-\mathfrak{C}}dz+a_3,~~a_3\in \mathbb{R}.
$$

From \eqref{MZ3.18}, without loss of generality, we could assume
that $\varphi(x,y)$ has the form
$$
 \varphi(x,y)=\sum
 a_{\alpha_j\beta_j}e^{i(x\alpha_j+y\beta_j)},
 ~~\beta_j=\frac{c_2\alpha_j}{\alpha_j-c_1},~~
 a_{\alpha_j\beta_j}\in \mathbb{C}, ~~a_{00}=0.
$$
It follows from  \eqref{MZ3.19},\eqref{MZ3.21} and \eqref{MZ3.22}
that $\alpha=\alpha_j$ is a root of the equation
$$
{\alpha}^{3}+a{\alpha}^{2}- \mathfrak{B}
\alpha+{c_{{1}}}\mathfrak{C} =0.
$$
 where $\mathfrak{B}=
2\,c_{{1}}a+3\,{c_{{1}}}^{2}+c_{{2 }}b+3\,{c_{{2}}}^{2} $.

Notice that up to now we only use \eqref{MZ3.9}---\eqref{MZ3.14} to obtain an explicit form of
$\psi$ as follows
\eqa \psi(x,y,z)&&=\sum a_1a_{\alpha_j\beta_j}e^{u(z)+ic_3 \int{e^{-3u(z)}dz}}
 e^{i(x\alpha_j+y\beta_j)}\nn\\&&\quad + ~a_2e^{ia_3}\sqrt{\mathfrak{C}-e^{2u(z)}}e^{ic_3
 \int{\dfrac{e^{-u(z)}}{e^{2u(z)}-\mathfrak{C}}}dz}.\label{MZ3.27}\eeqa
 Furthermore, it is easy to check that this function $\psi$ also satisfies \eqref{MZ3.12}.
 Thus we solve the system \eqref{MZ3.7}. Summarizing the above discussions,
 we have the following proposition.

\begin{prop}If we suppose that $u=u(z)$ is a smooth solution of
$u(z)'\,^2+e^{2u(z)}+c_3^2e^{-6u(z)}-\mathfrak{C}=0$; and $\alpha$
is a root of the equation \beq {\alpha}^{3}+a\,{\alpha}^{2}-
\mathfrak{B}\,\alpha +{c_{{1}}}\mathfrak{C} =0\label{MZ3.30} \eeq
Then
$\Psi=(e^{i\,\,\theta}\psi,e^{-u}\psi_x,e^{-u}\psi_y,e^{-u}\psi_z)^{t}$
is a solution of the system \eqref{MZ3.7} with $\theta=a\,x+b\,y$
and \beq \psi(x,y,z)=\kappa_1 e^{i(\alpha\, x +\beta\, y)} P(z)+
\kappa_2 Q(z),\quad \beta=\dfrac{c_2\alpha}{\alpha-c_1},\nn\eeq
where $\kappa_1$ and $\kappa_2$ are arbitrary complex constants
and
$$
 P(z)=e^{u(z)+ic_3 \int{e^{-3u(z)}dz}},\quad
 Q(z)=\sqrt{\mathfrak{C}-e^{2u(z)}}e^{ic_3
 \int{\dfrac{e^{-u(z)}}{e^{2u(z)}-\mathfrak{C}}}dz}.
$$

\end{prop}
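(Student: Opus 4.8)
The plan is to prove the Proposition by direct substitution, first reducing the matrix system \eqref{MZ3.7} to the scalar equations already extracted in the text and then checking the displayed $\psi$ against each of them. I would begin by inserting the ansatz \eqref{MZ3.8} into \eqref{MZ3.7}. Since $\theta=a\,x+b\,y$ has $\theta_x=a$, $\theta_y=b$, $\theta_z=0$, the top-component relation of each of the three matrix equations is an identity, while the remaining components reproduce exactly the six scalar equations \eqref{MZ3.9}--\eqref{MZ3.14}; note that \eqref{MZ3.9} and \eqref{MZ3.10} each arise twice (once from $\Psi_x=\mathcal{U}\Psi$ and once from $\Psi_z=\mathcal{W}\Psi$), which provides a useful internal consistency check. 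It therefore suffices to show that $\psi=\kappa_1 e^{i(\alpha x+\beta y)}P(z)+\kappa_2 Q(z)$ satisfies \eqref{MZ3.9}--\eqref{MZ3.14} under the two stated hypotheses.

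Next I would dispatch the first-order equations. Because $Q$ is independent of $x,y$, equations \eqref{MZ3.9} and \eqref{MZ3.10} reduce to the single condition $P'/P=u'+ic_3e^{-3u}$, namely \eqref{MZ3.16}, which is solved by the stated $P(z)$. Applying \eqref{MZ3.11} to the exponential factor then forces $\beta=c_2\alpha/(\alpha-c_1)$, exactly \eqref{MZ3.18}.

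I would then feed $\psi$ into \eqref{MZ3.13} and \eqref{MZ3.14} and split each into the part carrying $e^{i(\alpha x+\beta y)}P$ and the part carrying $Q$. In the $\varphi$-part the coefficient of $\varphi$ is $e^{2u}+(u'-ic_3e^{-3u})\,P'/P=e^{2u}+u'^2+c_3^2e^{-6u}$, which collapses to the constant $\mathfrak{C}$ by the first integral \eqref{MZ3.3}; this turns the $\varphi$-parts into the dispersion relations \eqref{MZ3.21} and \eqref{MZ3.22}, evaluated on $e^{i(\alpha x+\beta y)}$. \textbf{The main obstacle lies precisely here:} I must show that, after imposing $\beta=c_2\alpha/(\alpha-c_1)$, \emph{both} of these quadratic constraints collapse to the single cubic \eqref{MZ3.30}. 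The mechanism is an algebraic identity: clearing denominators and substituting $\mathfrak{C}=ac_1+bc_2+2c_1^2+2c_2^2$ and $\mathfrak{B}=2c_1a+3c_1^2+c_2b+3c_2^2$, the relation from \eqref{MZ3.21} reproduces \eqref{MZ3.30} directly, while that from \eqref{MZ3.22} reduces to a scalar multiple of \eqref{MZ3.30}; matching the $\alpha^2$, $\alpha$ and constant coefficients is where the particular forms of $\mathfrak{B}$ and $\mathfrak{C}$ are indispensable, and this is the computational heart of the argument. The $Q$-parts of \eqref{MZ3.13} and \eqref{MZ3.14} both reduce to the first-order linear equation \eqref{MZ3.20}; writing $Q=H(z)e^{iG(z)}$ and using the differentiated relation \eqref{MZ3.1}, I would separate real and imaginary parts to recover the stated $H$ and $G$, hence the displayed $Q(z)$.

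Finally, equation \eqref{MZ3.12} was never used in constructing $\psi$, so it must be verified independently. I would compute $\psi_z$ and $\psi_{zz}$ on each of the two summands of $\psi$ and check \eqref{MZ3.12} separately on the $P$-term and the $Q$-term, again invoking \eqref{MZ3.3} (and \eqref{MZ3.1}); this step is routine once the preceding ingredients are assembled. With all of \eqref{MZ3.9}--\eqref{MZ3.14} confirmed, the equivalence established in the first step shows that $\Psi$ solves \eqref{MZ3.7}, which proves the Proposition.
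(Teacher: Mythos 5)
Your proposal is correct and follows essentially the same route as the paper: the paper "proves" the Proposition by exactly the derivation you describe, namely rewriting the matrix system \eqref{MZ3.7} via the ansatz \eqref{MZ3.8} as the scalar equations \eqref{MZ3.9}--\eqref{MZ3.14}, solving the first-order ones for $P$ and the constraint $\beta=c_2\alpha/(\alpha-c_1)$, collapsing the coefficient of $\varphi$ to $\mathfrak{C}$ via the first integral \eqref{MZ3.3} so that \eqref{MZ3.21}--\eqref{MZ3.22} yield the cubic \eqref{MZ3.30}, extracting $Q$ from \eqref{MZ3.20} by separating real and imaginary parts with the help of \eqref{MZ3.1}, and finally verifying \eqref{MZ3.12} separately. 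Your identification of the collapse of the two quadratic constraints to the single cubic as the computational heart, and of \eqref{MZ3.12} as the one equation needing independent verification, matches the paper's own emphasis.
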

\subsection{Main results} We now state our main theorem.
\begin{thm}\label{thm3.3}
Suppose that the equation \eqref{MZ3.30} has three distinct roots,
denoted by $\alpha_1$, $\alpha_2$ and $\alpha_3$.
Write
 $\beta_j=\frac{c_2\alpha_j}{\alpha_j-c_1},~j=1,2,3$.
Then the  map $\mathcal{H}\circ \psi: \mathcal{R}^3\to \mathbb{CP}^3$
defines a conformally flat H-minimal Lagrangian immersion in $\mathbb{CP}^3$,
where $\mathcal{H}: \mathbb{S}^7\to \mathbb{CP}^3$ is the Hopf map and
the map $\psi: \mathcal{R}^3\to \mathbb{S}^7\subset \mathbb{C}^4$ is given by  the formula
\beq \psi=(\gamma_1 P(z)e^{i(x\alpha_1+y\beta_1)},\  \gamma_2 P(z)e^{i(x\alpha_2+y\beta_2)},\
 \gamma_3 P(z)e^{i(x\alpha_3+y\beta_3)}, \ \gamma_4 Q(z)). \nn\eeq
 Here $\gamma_4=\sqrt{\frac{1}{\mathfrak{C}}}$ and

  $\tiny{\gamma_1=\sqrt{\frac{\mathfrak{C}+\alpha_2\alpha_3}{\mathfrak{C}(\alpha_1-\alpha_2)(\alpha_1-\alpha_3)}}\,,
\,\gamma_2=\sqrt{\frac{\mathfrak{C}+\alpha_1\alpha_3}{\mathfrak{C}(\alpha_2-\alpha_1)(\alpha_2-\alpha_3)}}\,,
\, \gamma_3=\sqrt{\frac{\mathfrak{C}+\alpha_1\alpha_2}{\mathfrak{C}(\alpha_3-\alpha_1)(\alpha_3-\alpha_2)}}}\,.$

\end{thm}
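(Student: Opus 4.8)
The plan is to verify directly that the explicit $\psi$ meets every requirement of the setup in Section~2: that $\psi$ takes values in $\mathbb{S}^7$, that the frame $\Psi=(e^{i\theta}\psi,e^{-u}\psi_x,e^{-u}\psi_y,e^{-u}\psi_z)^{t}$ lies in $\mathrm{SU}(4)$, and that the Lagrangian angle $\theta=ax+by$ is harmonic. The first observation is that each of the four components of $\psi$ is one of the scalar solutions furnished by the Proposition: the first three are the $P(z)$-type solutions attached to the three distinct roots $\alpha_1,\alpha_2,\alpha_3$ of \eqref{MZ3.30}, and the fourth is the $Q(z)$-type solution. Hence each component solves the scalar system \eqref{MZ3.9}--\eqref{MZ3.12}; reading \eqref{MZ3.7} componentwise, $\Psi$ then solves \eqref{MZ3.7} automatically. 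The whole content of the theorem therefore reduces to checking that $\Psi$ is unitary, i.e.\ that its rows are orthonormal, which is precisely the statement that $\psi$ is a conformally flat Lagrangian immersion into $\mathbb{S}^7$.

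The row-orthonormality conditions split into two types. The identities governing the $z$-dependence I would treat using the ODE \eqref{MZ3.3}: a short computation gives $|P|^2=e^{2u}$, $|Q|^2=\mathfrak{C}-e^{2u}$, $|P'|^2=(\mathfrak{C}-e^{2u})e^{2u}$, $|Q'|^2=e^{4u}$, and the crucial cancellation $P\overline{P'}+Q\overline{Q'}=0$. These yield at once $\langle\psi,\psi_z\rangle=0$ and $\langle\psi_z,\psi_z\rangle=\mathfrak{C}^{-1}(|P'|^2+|Q'|^2)=e^{2u}$. The remaining conditions are purely algebraic sums in the roots. Writing $\gamma_k^2=(\mathfrak{C}+\prod_{l\neq k}\alpha_l)/(\mathfrak{C}\prod_{l\neq k}(\alpha_k-\alpha_l))$, every such sum reduces to evaluating $\sum_k \alpha_k^m/\prod_{l\neq k}(\alpha_k-\alpha_l)$, which by the partial-fraction (Lagrange interpolation) identity equals the complete homogeneous symmetric function $h_{m-2}$ of the roots (so $0,0,1$ for $m=0,1,2$). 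This immediately delivers $\sum_k\gamma_k^2=\mathfrak{C}^{-1}$ (hence $|\psi|^2=1$), $\sum_k\alpha_k\gamma_k^2=0$ (hence $\langle\psi,\psi_x\rangle=0$ and $\langle\psi_x,\psi_z\rangle=0$) and $\sum_k\alpha_k^2\gamma_k^2=1$ (hence $\langle\psi_x,\psi_x\rangle=e^{2u}$).

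The main obstacle is the family of identities involving the $\beta_k$, namely $\sum_k\beta_k\gamma_k^2=0$, $\sum_k\alpha_k\beta_k\gamma_k^2=0$ and $\sum_k\beta_k^2\gamma_k^2=1$, which control $\langle\psi,\psi_y\rangle$, $\langle\psi_x,\psi_y\rangle$ and $\langle\psi_y,\psi_y\rangle$; the difficulty is that $\beta_k=c_2\alpha_k/(\alpha_k-c_1)$ is only rational in $\alpha_k$, so the interpolation identities do not apply directly. The key trick I would use is that, because $\alpha_k$ is a root of \eqref{MZ3.30}, this rational expression collapses to the polynomial $c_2\beta_k=\alpha_k^2+(a+c_1)\alpha_k-\mathfrak{C}$ (this is exactly the relation, coming from \eqref{MZ3.21}, that produced the cubic in the first place). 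Substituting it converts each $\beta$-sum into a sum of the previous type; reducing the resulting numerator modulo $\alpha^3=-a\alpha^2+\mathfrak{B}\alpha-c_1\mathfrak{C}$ to degree $\le 2$ and applying Vieta's formulas $e_1=-a$, $e_2=-\mathfrak{B}$, $e_3=-c_1\mathfrak{C}$, the $h_{m-2}$ identities once more produce the stated values. Carrying this out for $\sum_k\beta_k^2\gamma_k^2$, whose numerator is of degree six, is the most laborious step and is where I expect the bookkeeping to be heaviest.

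Once $\Psi\in\mathrm{SU}(4)$ is established, $\psi$ is a conformally flat Lagrangian immersion into $\mathbb{S}^7$ with induced metric $e^{2u}(dx^2+dy^2+dz^2)$ and Lagrangian angle $\theta=ax+by$; the distinctness of the three roots is what guarantees the three $P$-type components carry distinct frequencies, so that $\psi_x,\psi_y,\psi_z$ are independent and $\psi$ is genuinely an immersion. Because $\theta$ is linear in $x,y$ and $u=u(z)$, one reads off $\triangle\theta=0$ from the formula for $\triangle$ in Section~2, so $\theta$ is harmonic and $L$ is H-minimal by Wolfson's criterion. Finally, composing with the Hopf map and invoking the facts recalled in Section~2, $\mathcal{H}\circ\psi$ is a conformally flat H-minimal Lagrangian immersion in $\mathbb{CP}^3$, as claimed.
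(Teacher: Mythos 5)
Your proposal is correct and follows essentially the same route as the paper: reduce everything to checking $\Psi=(e^{i\theta}\psi,e^{-u}\psi_x,e^{-u}\psi_y,e^{-u}\psi_z)^{t}\in\mathrm{SU}(4)$, handle the $z$-dependent inner products via the ODE \eqref{MZ3.3} and the resulting relations $|P'|^2+|Q'|^2=\mathfrak{C}e^{2u}$, $P\overline{P'}+Q\overline{Q'}=0$, and derive the six algebraic identities $\sum_j\gamma_j^2=\gamma_4^2$, $\sum_j\gamma_j^2\alpha_j=\sum_j\gamma_j^2\beta_j=\sum_j\gamma_j^2\alpha_j\beta_j=0$, $\sum_j\gamma_j^2\alpha_j^2=\sum_j\gamma_j^2\beta_j^2=1$ from Vieta's formulas for \eqref{MZ3.30}. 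You merely supply more detail than the paper does (the Lagrange-interpolation evaluation of the root sums and the reduction $c_2\beta_k=\alpha_k^2+(a+c_1)\alpha_k-\mathfrak{C}$ for the $\beta$-identities, which the paper leaves implicit), so the two arguments coincide in substance.
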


\begin{proof} It suffices to check that
\beq \Psi=(e^{i\,\,\theta}\psi,e^{-u}\psi_x,e^{-u}\psi_y,e^{-u}\psi_z)^{t} \in \mbox{SU}(4).\nn\eeq
By using \eqref{MZ3.30}, we have
\beq \alpha_1+\alpha_2+\alpha_3=-a,~~ \alpha_1\alpha_2+
\alpha_1\alpha_3+\alpha_2\alpha_3=-\mathfrak{B},~~
\alpha_1\alpha_2\alpha_3=-\mathfrak{C}. \label{Pf1} \eeq
It follows from \eqref{Pf1} and the explicit forms of $\gamma_j$ that
\eqa & \dsum_{j=1}^3\gamma_j^2=\gamma_4^2, &\quad  \sum_{j=1}^3\gamma_j^2\alpha_j=0,\quad
 \dsum_{j=1}^3\gamma_j^2\alpha_j^2=1 ,\nn \\
& \dsum_{j=1}^3\gamma_j^2\alpha_j\beta_j=0, &
 \quad \dsum_{j=1}^3\gamma_j^2\beta_j=0, \quad\sum_{j=1}^3\gamma_j^2\beta_j^2=1.\nn
\eeqa
These identities yield that
\eqa
&& \la \psi,\psi\ra=1, \la \psi_x,\psi_x\ra=\la \psi_y,\psi_y\ra=e^{2u},\nn\\
&& \la \psi,\psi_x\ra=\la \psi,\psi_y\ra=\la \psi,\psi_z\ra=0,\nn\\
&& \la \psi_x,\psi_y\ra= \la \psi_y,\psi_z\ra=\la \psi_z,\psi_x\ra=0, \nn\\
&& \la \psi_z,\psi_z\ra = P'(z)\overline{P'(z)}\sum_{j=1}^3\gamma_j^2 +
      \gamma_4^2 Q'(z)\overline{Q'(z)}  \nn\\
&&\qquad  =\gamma_4^2 [e^{2u}(u'2+c_3^2e^{-6u})+\frac{e^{4u}(u'2+c_3^2e^{-6u})}{\mathfrak{C}-e^{2u}}]\nn\\
&&\qquad =e^{2u}. \qquad \mbox{by using \eqref{MZ3.3}}  \nn\eeqa
That is to say, $\Psi \in \mbox{SU}(4)$. Thus we complete the proof of the theorem.\end{proof}

We finish this section to discuss how to obtain conformally flat H-minimal
Lagrangian tori in $\mathbb{CP}^3$.

Notice that in \eqref{MZ3.3} if we make the following change
 \beq u=u(z):=-\log(2\sqrt{-q(z)}\,),\label{MZ3.2}\eeq
then we have
\beq q'(z)^2=256c_3^2q(z)^5+4\mathfrak{C}q(z)^2+q(z).\label{MZ4.1}\eeq
Thus if we choose three real constants $c_1$, $c_2$ and $c_3$
such that the equation $$256c_3^2t^5+4\mathfrak{C}t^2+t=0$$
has two negative roots and does not have multiple roots,
then this assures that \eqref{MZ4.1} has a smooth periodic solution of the period $\tau$,
see \cite{NO} for details. It follows from \eqref{MZ3.2} that so is \eqref{MZ3.3}.
We here remark that in this case $\mathfrak{C}=ac_1+bc_2+2c_1^2+2c_2^2>0$.

We next discuss the condition such that
 the function $\psi$ in \eqref{MZ3.27} is a periodic function of $x$, $y$,$z$ repesctively.
According to the form of $\psi$ in \eqref{MZ3.27}, if  we assume
that $c_1\in \mathbb{Q}$ and $\alpha_2$, $\alpha_2$ and $\alpha_3$
are  three distinct rational roots of \eqref{MZ3.30},  then $\psi$
is periodic w.r.t. $x$ and $y$. Notice that $u(z+\tau)=u(z)$ and
there exists a periodic function $h(z)$ of the periodic $\tau$
such that
$$
 \int
 \dfrac{e^{-3u(z)}\mathfrak{C}}{e^{2u(z)}-\mathfrak{C}}dz=h(z)+z
 \int_0^\tau
 \dfrac{e^{-3u(z)}\mathfrak{C}}{e^{2u(z)}-\mathfrak{C}}dz.
$$
This implies that if we assume
$$
 \frac{c_3\mathfrak{C}\tau}{2\pi}
 \int_0^\tau \dfrac{e^{-3u(z)}}{e^{2u(z)}- \mathfrak{C}}dz \in
 \mathbb{Q}\,,
$$
then $\psi$ is periodic in $z$ with the period $n \tau$ for some
$n\in \mathbb{N}$.

Thus, combining with Theorem \ref{thm3.3}, we have
\begin{thm}If we suppose that

$1$. $u=u(z)$ is a periodic solution of \eqref{MZ3.3}
with the period $\tau$;

$2$. $c_1\in \mathbb{Q},\quad \dfrac{c_3\mathfrak{C}\tau}{2\pi}
\int_0^\tau \dfrac{e^{-3u(z)}}{e^{2u(z)}-
\mathfrak{C}}dz \in \mathbb{Q}\,$;

$3$. $\alpha_1$,\ $\alpha_2$ and $\alpha_3$ are distinct rational roots of \eqref{MZ3.30}.

 Then the map $\mathcal{H}\circ \psi: \mathbb{R}^3\to \mathbb{CP}^3$
defines a conformally flat H-minimal Lagrangian torus in $\mathbb{CP}^3$.
\end{thm}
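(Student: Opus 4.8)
The plan is to treat the two ingredients of the statement separately: the fact that $\mathcal{H}\circ\psi$ is a conformally flat H-minimal Lagrangian \emph{immersion} is already delivered by Theorem \ref{thm3.3}, whose hypotheses are met here (hypothesis $1$ gives a solution $u$ of \eqref{MZ3.3}, and hypothesis $3$ supplies three distinct roots of \eqref{MZ3.30}). All the genuinely new content is therefore compactness: I must show that $\mathcal{H}\circ\psi$ factors through a $3$-torus $\mathbb{T}^3=\mathbb{R}^3/\Lambda$ for a rank-three lattice $\Lambda$ of periods, after which the descended map inherits immersivity, conformal flatness and H-minimality from the local assertions of Theorem \ref{thm3.3}. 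So the whole proof reduces to exhibiting three independent periods of $\mathcal{H}\circ\psi$, one in each coordinate direction.

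First I would handle the $x$- and $y$-directions, where the map closes up already in $\mathbb{S}^7$. Shifting $x\mapsto x+L_x$ multiplies the $j$-th of the first three components of $\psi$ by $e^{i\alpha_j L_x}$ and leaves the fourth component $\gamma_4 Q(z)$ untouched; since the fourth component is independent of $x$, the Hopf projection can only absorb the \emph{trivial} common phase, so I need $\alpha_j L_x\in 2\pi\mathbb{Z}$ for $j=1,2,3$ simultaneously. Rationality of the $\alpha_j$ (hypothesis $3$) makes this possible: taking $L_x=2\pi$ times a common denominator of $\alpha_1,\alpha_2,\alpha_3$ works. For $y$ I would use $\beta_j=\dfrac{c_2\alpha_j}{\alpha_j-c_1}$; since $c_1,\alpha_j\in\mathbb{Q}$ (hypotheses $2$ and $3$) the quotients $\beta_j/c_2$ are rational, hence the $\beta_j$ are pairwise commensurable and a common $L_y$ with $\beta_j L_y\in 2\pi\mathbb{Z}$ exists. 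Thus $\psi$ itself—not merely $\mathcal{H}\circ\psi$—is genuinely periodic in $x$ and $y$.

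The hard part will be the $z$-direction, because $P(z)$ and $Q(z)$ are \emph{not} periodic: over one $u$-period they acquire phase factors, and the map never closes up in $\mathbb{S}^7$, only downstairs in $\mathbb{CP}^3$. Using $u(z+\tau)=u(z)$ from hypothesis $1$ and periodicity of $e^{-3u}$ and of $\tfrac{e^{-u}}{e^{2u}-\mathfrak{C}}$, I would compute the phase increments over $n$ periods,
\[
P(z+n\tau)=e^{inc_3 A}P(z),\qquad Q(z+n\tau)=e^{inc_3 B}Q(z),
\]
with $A=\int_0^\tau e^{-3u}\,dz$ and $B=\int_0^\tau \frac{e^{-u}}{e^{2u}-\mathfrak{C}}\,dz$. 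Because the first three components of $\psi$ carry the factor $P$ while the fourth carries $Q$, the Hopf image satisfies $\mathcal{H}(\psi(x,y,z+n\tau))=\mathcal{H}(\psi(x,y,z))$ exactly when the two phases agree, i.e. when $nc_3(A-B)\in 2\pi\mathbb{Z}$. The algebraic key is the identity
\[
e^{-3u}-\frac{e^{-u}}{e^{2u}-\mathfrak{C}}=\frac{-\mathfrak{C}\,e^{-3u}}{e^{2u}-\mathfrak{C}},
\]
which turns $A-B$ into $-\mathfrak{C}\int_0^\tau \frac{e^{-3u}}{e^{2u}-\mathfrak{C}}\,dz$; hypothesis $2$ then says precisely that $\dfrac{c_3(A-B)}{2\pi}\in\mathbb{Q}$, so choosing $n$ to clear its denominator yields a genuine $z$-period $n\tau$ of $\mathcal{H}\circ\psi$. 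The delicate point here, absent in the $x,y$ analysis, is that one must track two distinct phase increments and use that the Hopf projection sees only their difference.

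Finally I would assemble the three periods $(L_x,0,0)$, $(0,L_y,0)$, $(0,0,n\tau)$ into a rank-three lattice $\Lambda\subset\mathbb{R}^3$. The induced metric $e^{u(z)}(dx^2+dy^2+dz^2)$ is $\Lambda$-invariant (it depends only on $z$ and $u$ is $\tau$-periodic), and $\mathcal{H}\circ\psi$ is $\Lambda$-periodic by the previous two steps, so it descends to a map of the compact quotient $\mathbb{T}^3=\mathbb{R}^3/\Lambda$. Immersivity, being a local and translation-invariant property, is preserved under descent, and conformal flatness and H-minimality are already guaranteed pointwise by Theorem \ref{thm3.3}; hence the descended map is a conformally flat H-minimal Lagrangian torus in $\mathbb{CP}^3$, as claimed.
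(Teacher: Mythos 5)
Your proposal is correct and follows essentially the same route as the paper: periodicity in $x$ and $y$ comes from the rationality of $c_1$ and the $\alpha_j$ (hence commensurability of the $\beta_j$), and periodicity in $z$ of the Hopf image comes from hypothesis $2$, whose integrand $\frac{\mathfrak{C}e^{-3u}}{e^{2u}-\mathfrak{C}}$ is exactly the difference of the phase densities $e^{-3u}$ and $\frac{e^{-u}}{e^{2u}-\mathfrak{C}}$ of $P$ and $Q$. You are in fact slightly more careful than the paper, which loosely asserts that $\psi$ itself becomes $z$-periodic, whereas, as you observe, only the phase \emph{difference} is controlled and the map closes up in the $z$-direction only after the Hopf projection.
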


\section*{Acknowledgment}
A.E.Mironov is grateful to generous supports by KIAS, where part of
the work was done in KIAS. The work of A.E.Mironov was partially
supported by Russian Federation foundation for basic research (grant
no.06-01-00094a) and grant MK-9651.2006.1 of the president of
Russian Federation.

Zuo would like to thank Qing Chen and  Bumsig Kim and Youjin Zhang for their
constant guidance and supports. Zuo  also thanks Chengmin Bai for hospitality
during stay at Chern Institute of mathematics in Nankai university, where part
of the work was done. The work of Zuo was partially supported by a post-doc
fellowship from KIAS and the Natural Science Foundation of China
(grant no. 10501043).

Both of the authors thank organizers for the hospitality during the ISLAND-3 on
Islay, Scotland.


\end{document}